\numberwithin{equation}{section}
\newtheorem{theorem}{Theorem}[section]
\newtheorem{lemma}[theorem]{Lemma}
\newtheorem{corollary}[theorem]{Corollary}
\newtheorem{proposition}[theorem]{Proposition}
\theoremstyle{definition}
\newtheorem{definition}[theorem]{Definition}
\newtheorem{remark}[theorem]{Remark}
\newtheorem{example}[theorem]{Example}
\newtheorem*{theorem*}{Theorem}
\newtheorem*{corollary*}{Corollary}
\newtheorem*{intro:rank1}{Theorem \ref{rank1}}
\newtheorem*{intro:bridge}{Theorem \ref{T:generalbridge}}
\def\Z{\mathbb{Z}}
\def\a{\alpha}
\begin{document}
	

 
\title{Graphs admitting only constant splines}

\author[K. Anders]{Katie Anders}
\address{University of Texas at Tyler}
\email{kanders@uttyler.edu}

\author[A. Crans]{Alissa S. Crans}
\address{Loyola Marymount University}
\email{acrans@lmu.edu}

\author[B. Foster-Greenwood]{Briana Foster-Greenwood}
\address{California State Polytechnic University, Pomona}
\email{brianaf@cpp.edu}

\author[B. Mellor]{Blake Mellor}
\address{Loyola Marymount University}
\email{blake.mellor@lmu.edu}

\author[J. Tymoczko]{Julianna Tymoczko}
\address{Smith College}
\email{jtymoczko@smith.edu}

\begin{abstract}
We study {\em generalized graph splines,} introduced by Gilbert, Viel, and the last author \cite{gpt}.  For a large class of rings, we characterize the graphs that only admit constant splines.  To do this, we prove that if a graph has a particular type of cutset (e.g., a bridge), then the space of splines naturally decomposes as a certain direct sum of submodules. As an application, we use these results to describe splines on a triangulation studied by Zhou and Lai, but over a different ring than they used.
\end{abstract}

\maketitle

\section{Introduction}


This paper studies generalized splines, which are parametrized by a ring, a graph, and a map from the edges of the graph to ideals in the ring. As the name suggests, they generalize the classical splines from analysis and applied mathematics.  The main goal of this paper is to describe when the module of generalized splines has rank one over the base ring.  We give multiple equivalent conditions for this to be true over a large family of rings.  Our main tools are of independent interest: different reductions on the module of splines depending on either algebraic characteristics of the edge labeling or combinatorial characteristics of the graph.  

Classically, a spline is a collection of polynomials on the faces of a polyhedral complex that agree to specified degree of smoothness on the intersections of faces. More formally, given a simplicial complex $\Delta$ we define the vector space of {\em splines} $S_d^r(\Delta)$ to be the space of all piecewise polynomial functions on $\Delta$ that have degree $d$ and order of smoothness $r$.  Splines are a standard topic in numerical analysis and are used in data interpolation, geometric design, and to approximate solutions to partial differential equations, among other applications.  Splines are also studied from a more theoretical perspective by analysts (see \cite{SchLai07} for a survey).  Two fundamental problems in both contexts are to find a basis (often satisfying specified constraints) or to compute the dimension of the vector space of splines over a given polyhedral complex, e.g., \cite{strang, AlfSchu, aps, ChuiLai, SchLai07}.

Our approach is essentially dual to that of classical splines.  
Billera and Rose observed that splines can be viewed as functions on the {\em dual graph} of the polyhedral complex, reinterpreting the order-of-smoothness condition as a compatibility condition across each edge of the graph \cite{br}.  Independently, splines were reinvented in a combinatorial construction of equivariant cohomology, often called {\em GKM theory} by symplectic geometers and algebraic topologists \cite{gkm, Tym16}, and also arise naturally in the context of toric varieties in algebraic geometry \cite{Pay06}. 

The following definition unifies and generalizes previous work, allowing us to define splines on an arbitrary graph (not just those that can be realized in geometric settings) and to consider a much larger class of rings (not just polynomial rings). It first appeared in work of Gilbert, Viel, and the last author of this work \cite{gpt}, and extends earlier work by Guillemin and Zara to put GKM theory in a combinatorial context (see \cite{GuiZar00}, also e.g., \cite{GuiZar01b, GuiZar01a, GuiZar03}).

\begin{definition}     
Given a graph $G = (V, E)$ and a commutative ring $R$ with unit, an \textit{edge labeling} of $G$ is a function $\alpha: E \rightarrow I(R)$, where $I(R)$ is the set of ideals of $R$.  A \textit{spline} $p$ on $(G,\alpha)$ is a vertex labeling $p:V\rightarrow R$ such that for each edge $uv$, the difference $p(u)-p(v)$ lies in the ideal $\alpha(uv)$.  Let $S_R(G,\alpha)$ denote the set of all splines on $G$ with labels from $R$ and edge labeling $\alpha$.   When $R$ and $\alpha$ are clear from context, we write $S(G)$.
\end{definition}

In geometric applications, we have an underlying complex algebraic variety with a well behaved torus action.  The ring $R$ is the collection of polynomials in $n$ variables, where $n$ is the rank of the torus.  The graph corresponds to the 1-skeleton of the moment polytope with respect to the torus action and the labeling $\alpha$ records the weight of the torus action on each 1-dimensional torus orbit in the variety.

Note that $S_R(G, \alpha)$ is both a ring (with addition and multiplication of splines defined pointwise) and an $R$-module~\cite{gpt}. We focus on the $R$-module structure of $S_R(G,\alpha)$ in this paper.  Any collection of $R$-module generators of $S_R(G,\alpha)$ generates $S_R(G,\alpha)$ as a ring.  However, a minimal set of ring generators can be much smaller, and computing a multiplication table is generally very difficult--in fact, it is a longstanding open problem to compute the multiplication table with respect to the Schubert basis for the ring of splines corresponding to the equivariant cohomology of the full flag variety.

Analogous to the classical problem of finding a basis for the vector space of splines, a core problem in the theory of generalized graph splines is to describe minimal generating sets for $S_R(G, \alpha)$ as an $R$-module.  This is not difficult to do when the graph is a tree \cite{gpt} but is significantly harder for graphs containing cycles.  Various authors have studied the existence, size and construction of these generating sets for cycles and other graphs over the integers, the rings $\Z_m$, and other rings \cite{AltSar2, AltSar1, bhkr, bt, dip, GuiZar01b, GuiZar03, hmr, pstw}.

When the ring $R$ is an integral domain, then every generating set for $S_R(G, \alpha)$ has at least $n$ elements, where $n$ is the number of vertices in the graph $G$ \cite{gpt}. If $R$ is not an integral domain, however, this is no longer true; in fact, there are nontrivial labeled graphs that admit only constant splines \cite{bt}.  Determining whether there are nonconstant splines is thus a fundamental question in the field.

Our primary goal in this paper is to describe the edge labeled graphs which only admit constant splines. In Theorem \ref{rank1} we provide two equivalent combinatorial characterizations of these graphs for a large family of rings, one in terms of cutsets and the other in terms of spanning trees.

\begin{intro:rank1}
Let $R=\displaystyle\bigoplus_{i=1}^k R_i$, where each $R_i$ is an irreducible commutative ring with identity.  Let $G$ be a connected graph with edge set $E(G)$ and edge labeling $\alpha$.  Then the following are equivalent.
\begin{enumerate}
\item The edge labeled graph $(G,\alpha)$ has rank one.
\item For any cutset $C\subset E(G)$, the intersection $\displaystyle\bigcap_{e\in C}\alpha(e)=0$.
\item The graph $G$ has spanning trees $T_1, \ldots, T_k$ such that all edge labels of $T_i$ are contained in $\displaystyle\bigoplus_{j\neq i}^k R_j$ for all  $1\leq i\leq k$.
\end{enumerate}
\end{intro:rank1}

Along the way, we study operations that induce a decomposition of the module of splines. We take two approaches, with the following main results:
\begin{itemize}
\item In Corollary \ref{reducetotree} we give an algebraic condition on the labelling $\alpha$ under which the splines $S_R(G,\alpha)$ can be reduced (as a ring and as an $R$-module) to the splines on a spanning tree of $G$.  This condition is essentially that the edge labels form a set of nested ideals.
\item In Theorem \ref{T:generalbridge} we give a combinatorial condition on the graph $G$ under which the $R$-module $S_R(G,\alpha)$ decomposes into a direct sum of specific submodules.  Our condition generalizes the graph-theoretic notion of a bridge and applies to {\em any} choice of ring $R$.
\end{itemize}


As an application, we consider splines on a triangulation studied by Zhou and Lai \cite{zl} but over integers mod $m$ rather than their polynomial rings. The contrast between our results and theirs demonstrates the significant differences that can occur when changing the base ring.


\section{Decomposing Graphs} \label{decompose}

In this section we decompose splines based on different combinatorial conditions of the graph or algebraic conditions on the labeling.  In what follows, we assume that our graphs $G$ are connected because the ring (and module) of splines naturally decomposes over disconnected components. 

The following lemma describes operations on graphs that allow us to assume no edge is labeled either by the ideal $(0)$ or by the ideal $(1)$.  It was proven by Gilbert, the final author, and Viel \cite{gpt}. 

\begin{lemma}[Gilbert, Tymoczko, Viel] \label{contract zero}
 If $\alpha(uv)$ is the ideal generated by $0$ and $G/uv$ is the graph obtained by contracting edge $uv$ then $S(G) \cong S(G/uv)$. If $\alpha(uv)$ is the whole ring (generated by $(1)$ in $R$), then  $S(G)\cong S(G-uv)$, where $G-uv$ is the graph obtained by deleting the edge $uv$.
\end{lemma}

Contractions can produce loops or multiple edges, so using Lemma~\ref{contract zero} appears to require splines on {\em multigraphs} (which can have loops and multiple edges between two vertices).  However, the next result is the key step to prove that for each multigraph, there is a simple graph with the same ring of splines.  Our definition for splines on a multigraph is that the spline condition must be satisfied for each edge individually. 

\begin{lemma} \label{multigraph}
Suppose $(G,\alpha)$ is a multigraph, with two edges $e_1$ and $e_2$ between vertices $u$ and $v$.  Suppose $G'$ is the graph obtained by replacing $e_1$ and $e_2$ with a single edge $e$.  Suppose $\alpha'$ is defined by setting $\alpha'(e) = \alpha(e_1) \cap \alpha(e_2)$ and by setting $\alpha'(e') = \alpha(e')$ for all other edges $e'$.  Then $S(G, \alpha) = S(G', \alpha')$.
\end{lemma}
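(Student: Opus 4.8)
The plan is to show the two sets of splines coincide by showing that the defining conditions are literally equivalent at each edge. A vertex-labeling $p : V \to R$ is the same data in $(G,\alpha)$ and $(G',\alpha')$ since $G$ and $G'$ have the same vertex set; so it suffices to check that $p$ satisfies the spline condition on $(G,\alpha)$ if and only if it satisfies it on $(G',\alpha')$. For every edge other than $e_1, e_2, e$, the labelings agree, so those conditions are identical and contribute nothing to the argument.

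The heart of the matter is purely the edge between $u$ and $v$. On the multigraph side, $p$ must satisfy both $p(u) - p(v) \in \alpha(e_1)$ and $p(u) - p(v) \in \alpha(e_2)$, which together say exactly that $p(u) - p(v) \in \alpha(e_1) \cap \alpha(e_2)$. On the simple-graph side, $p$ must satisfy $p(u) - p(v) \in \alpha'(e) = \alpha(e_1) \cap \alpha(e_2)$. These are the same condition, so I would conclude $S(G,\alpha) = S(G',\alpha')$ as sets (and the reader can note this is compatible with the ring and $R$-module structures, since addition and scalar multiplication of vertex-labelings are defined pointwise and independently of the edge data).

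I do not anticipate a genuine obstacle here: the statement is essentially an unwinding of definitions, relying only on the elementary fact that $x \in A$ and $x \in B$ is equivalent to $x \in A \cap B$ for ideals (indeed any subsets) $A, B$. If anything warrants a sentence of care, it is making explicit that $V(G) = V(G')$ so that ``vertex-labeling'' means the same thing on both sides, and that the intersection $\alpha(e_1) \cap \alpha(e_2)$ is again an ideal of $R$, so that $\alpha'$ is a legitimate edge labeling in the sense of the definition. An obvious induction then extends this from two parallel edges to any finite number of parallel edges, reducing any multigraph to a simple graph with the same splines, which is the use we want to make of the lemma.
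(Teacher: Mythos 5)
Your proof is correct and follows essentially the same route as the paper's: both arguments reduce to the observation that $p(u)-p(v)$ lies in $\alpha(e_1)$ and in $\alpha(e_2)$ if and only if it lies in $\alpha(e_1)\cap\alpha(e_2)$, the paper phrasing this as two set inclusions and you as a single equivalence of conditions. The extra remarks (same vertex set, the intersection being an ideal, the inductive extension to more parallel edges) are accurate but not needed beyond what the paper states.
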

\begin{proof}
First suppose $p \in S(G, \alpha)$.  Then $p(u) - p(v) \in \alpha(e_1)$ and $p(u) - p(v) \in \alpha(e_2)$.  Thus $p(u) - p(v) \in \alpha(e_1) \cap \alpha(e_2) = \alpha'(e)$ so $p \in S(G', \alpha')$.  

Conversely, if $p \in S(G', \alpha')$, then $p(u) - p(v) \in \alpha'(e) = \alpha(e_1) \cap \alpha(e_2)$.  Thus $p(u) - p(v)$ is in both ideals $\alpha(e_i)$ and so $p \in S(G, \alpha)$.  This proves the claim.
\end{proof}

The next corollary is immediate.

\begin{corollary}
Suppose $(G,\alpha)$ is a multigraph.  Let $G'$ be the graph obtained by erasing every loop (namely edge of the form $vv$ for some vertex $v$) from $G$, and by erasing all but one edge in the event of multiple edges between any two vertices $uv$.  Let $\alpha'$ be the edge labeling obtained by assigning to the edge $uv$ in $G'$ the edge label $\alpha(e_1) \cap \alpha(e_2) \cap \cdots \alpha(e_k)$, where $e_1$, $e_2$, $\ldots$, $e_k$ are all edges between $u$ and $v$ in $G$.  Then $S(G,\alpha) = S(G',\alpha')$.
\end{corollary}
\begin{proof}
Loops start and end at the same vertex so the spline condition is trivially satisfied on each loop.  Thus loops can be removed without changing the space of splines. Applying Lemma \ref{multigraph} repeatedly then gives the result.	
\end{proof}

If $R$ is a field, then its only ideals are $(0)$ and $(1)$.  Lemma~\ref{contract zero} then says that the ring $S(G)$ is isomorphic to the ring of splines on an isolated set of, say, $n$ vertices, which is simply $R^n$.  For this reason, we generally do not consider splines over fields.

\subsection{Edges of cycles.} We give a new condition under which an edge of a cycle may be deleted without losing any information: that the ideal associated to one edge $uv$ of the cycle contains all of the other edge labels of the cycle.  Informally, in this situation the edge $uv$ contains information that is redundant with the rest of the cycle and so may be erased.  The next lemma states this formally; in the rest of the section, we elaborate some consequences.

\begin{lemma}\label{delete gcd}
	Let $(G,\alpha)$ be an edge labeled graph. Let $uv$ be an edge and suppose there is a cycle $C$ in $G$ that contains $uv$ such that the ideal $\alpha(uv)$ contains every edge label from $C$.  Then $S(G)=S(G-\{uv\})$.
\end{lemma}	
\begin{proof}
	Every spline on $G$ is also a spline on $G-\{uv\}$, so $S(G) \subseteq S(G-\{uv\})$.  
	
	Now suppose that $p \in S(G-\{uv\})$. Let $w_0w_1w_2\cdots w_{n}w_{n+1}w_0$ be a cycle in $G$ that contains the edge $uv$, say with $w_0=u$ and $w_{n+1}=v$.  Since
	\[
	p(u) - p(v) = p(u) - p(w_1) + p(w_1) - p(w_2) + \cdots + p(w_n) - p(v)
	\]
	and since $p(w_i) - p(w_{i+1}) \in \alpha(w_iw_{i+1})$ for each $i$, we know that
	\[p(u) - p(v) \in \sum_{i=0}^{n}{\alpha(w_iw_{i+1})}.\]
This is contained in $\alpha(uv)$ by the hypothesis that $\alpha(uv)$ contains all the edge labels $\alpha(w_iw_{i+1})$ for $0\leq i\leq n$.  	Hence $p$ is also a spline for $G$ and so $S(G-\{uv\}) \subseteq S(G)$.  The claim follows.
\end{proof}

Lemma~\ref{delete gcd} has particularly interesting consequences when the set of edge labels is linearly ordered (namely, when any two distinct edge labels $\alpha_1$ and $\alpha_2$ satisfy either $\alpha_1\subset\alpha_2$ or $\alpha_2\subset\alpha_1$).  In this case, we can reduce to considering the $R$-module of splines on a tree. 

\begin{corollary}\label{reducetotree}
	Suppose $(G,\alpha)$ is a connected graph whose set of edge labels can be linearly ordered by inclusion. Then there exists a spanning tree $T$ such that $S(G)=S(T)$.
\end{corollary}

\begin{proof}
	If $G$ is a tree, then we are done. Otherwise, the graph $G$ must contain a cycle. Since the set of edge labels can be linearly ordered, there must be an edge, say $uv$, of the cycle such that $\alpha(uv)$ contains all other ideals labeling the edges of that cycle. By
	Lemma~\ref{delete gcd}, we have $S(G)=S(G-\{uv\})$. Note that $G-\{uv\}$ is still connected, so either $G-\{uv\}$ is a
	tree or we can find another cycle and find an edge in that cycle to delete. Continuing until no cycles remain, we obtain a tree $T$ with $S(G)=S(T)$.
\end{proof}

In particular Corollary~\ref{reducetotree} applies when working over rings for which the set of {\em all} ideals is linearly ordered; such rings are known as {\it uniserial rings}. Examples include $\Z_{p^k}$ for $p$ prime and $F[x]/(x^n)$ where $F$ is a field. Furthermore, commutative uniserial rings that are domains are exactly valuation rings. 

\begin{remark}
Corollary \ref{reducetotree} and Corollary \ref{bridge} can together be used to describe bases of splines over $\Z_{p^e}$ for any prime $p$ (see Remark \ref{remark: recurse for trees} for more detail on how to generate bases of trees).  However, our results do not give an explicit construction like that in Philbin-Swift-Tammaro-Williams \cite{pstw}.
\end{remark}



\subsection{Bridges.}

We now turn to the case of graphs that contain a {\em bridge}, i.e., an edge whose deletion disconnects the graph.  We will show that the module of splines on the graph is (almost) the direct sum of the spline modules for the two components after the bridge is deleted.  We find it convenient to define splines {\em based at a vertex}.

\begin{definition}
	Given a vertex $v$ of a graph $G$, we say that a spline $p \in S_R(G, \alpha)$ is {\em based at $v$} if $p(v) = 0$.  We denote the submodule of splines based at $v$ by $S_R(G, \alpha; v)$ and call $v$ the {\em basepoint} of this submodule. We write $S(G; v)$ if $R$ and $\alpha$ are clear.
\end{definition}

Our terminology is new but the idea is not, e.g., \cite{GuiZar03, KnuTao03, gpt}.
	
Note that $S(G; v)$ may equal $S(G; w)$ for distinct vertices $v$ and $w$.  This happens precisely when $p(v) = p(w)$ for every spline $p \in S(G)$. 

The next lemma restates an earlier result using our terminology \cite[Theorem~2.12]{gpt}.  It says that $S(G; v)$ is almost the same as $S(G)$.  We use $\mathbbm{1}_G \in S(G)$ to denote the constant spline that takes the value $1$ on every vertex.

\begin{lemma}[Gilbert, Tymoczko, Viel]\label{basepoint}
For any vertex $v$ of $G$, there is an $R$-module decomposition
 \[S_R(G) \cong S_R(G; v) \oplus \langle \mathbbm{1}_G \rangle \cong S_R(G; v) \oplus R.\]
\end{lemma}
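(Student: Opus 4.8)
The plan is to exhibit an explicit isomorphism between $S_R(G)$ and $S_R(G;v) \oplus R$ by splitting off the ``value at $v$'' as a free rank-one summand. First I would define the map $\varphi: S_R(G) \to S_R(G;v) \oplus R$ by $\varphi(p) = \bigl(p - p(v)\mathbbm{1}_G,\ p(v)\bigr)$. The first step is to check this is well-defined: for any spline $p$, the function $p - p(v)\mathbbm{1}_G$ is again a spline (since $S(G)$ is an $R$-module and $\mathbbm{1}_G \in S(G)$), and it vanishes at $v$ by construction, so it lies in $S_R(G;v)$; the second coordinate $p(v)$ is just an element of $R$. The map is clearly $R$-linear (and in fact a ring homomorphism onto $S_R(G;v) \oplus R$ if we give the target the appropriate product structure, but for the module/ring isomorphism statement linearity is the key point).

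Next I would write down the inverse map $\psi: S_R(G;v) \oplus R \to S_R(G)$ by $\psi(q, r) = q + r\mathbbm{1}_G$. Again $q + r\mathbbm{1}_G \in S(G)$ because $S(G)$ is an $R$-module containing $\mathbbm{1}_G$. Then I would verify the two compositions are the identity: $\varphi(\psi(q,r)) = \bigl(q + r\mathbbm{1}_G - (q(v) + r)\mathbbm{1}_G,\ q(v) + r\bigr) = (q + r\mathbbm{1}_G - r\mathbbm{1}_G,\ r) = (q, r)$ since $q(v) = 0$; and $\psi(\varphi(p)) = (p - p(v)\mathbbm{1}_G) + p(v)\mathbbm{1}_G = p$. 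This establishes the second isomorphism $S_R(G) \cong S_R(G;v) \oplus R$. For the first isomorphism, I would observe that $r \mapsto r\mathbbm{1}_G$ gives an $R$-module isomorphism $R \xrightarrow{\sim} \langle \mathbbm{1}_G \rangle$ — it is injective because $\mathbbm{1}_G$ takes the value $1 \neq 0$ at $v$ (here one should note $R$ has unity and is presumably nonzero), so $r\mathbbm{1}_G = 0$ forces $r = 0$ — and hence $S_R(G;v) \oplus R \cong S_R(G;v) \oplus \langle \mathbbm{1}_G \rangle$.

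There is really no serious obstacle here; the argument is a direct-sum decomposition of the familiar ``affine space = linear subspace $\oplus$ translations'' type. The only point requiring a moment's care is confirming that $\langle \mathbbm{1}_G \rangle$ is free of rank one, i.e. that $\mathbbm{1}_G$ is not a torsion element — this follows immediately from evaluation at any vertex — and making sure the decomposition $p = (p - p(v)\mathbbm{1}_G) + p(v)\mathbbm{1}_G$ genuinely lands in $S_R(G;v) \oplus \langle \mathbbm{1}_G \rangle$ as an internal direct sum, which it does because a spline that is both based at $v$ and a multiple $r\mathbbm{1}_G$ must have $r = r\cdot 1 = (r\mathbbm{1}_G)(v) = 0$. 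I would present the proof by defining $\varphi$, defining $\psi$, checking mutual inverseness in a couple of lines, and then noting the rank-one freeness of $\langle \mathbbm{1}_G\rangle$ to get the first displayed isomorphism.
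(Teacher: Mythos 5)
Your proof is correct and follows essentially the same route as the paper's: both rest on the decomposition $p = (p - p(v)\mathbbm{1}_G) + p(v)\mathbbm{1}_G$ and verify independence/injectivity by evaluating at $v$. The only cosmetic difference is that you package the argument as an explicit pair of mutually inverse maps and separately justify $\langle \mathbbm{1}_G \rangle \cong R$, whereas the paper phrases it as spanning plus independence of an internal direct sum.
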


\begin{remark}
The submodule of splines based at vertex $v$ is closed under multiplication and so is an ideal in $S_R(G)$.  However, the decomposition in Lemma~\ref{basepoint} is not a ring isomorphism. 
\end{remark}


Extending Lemma~\ref{basepoint}, we next show that if a graph has a bridge, its space of splines is (almost) the direct sum of the modules of splines on the two components created by removing the bridge.  In fact, we prove this in an even more general setting, one in which the ``bridge" is a subgraph $H$ that is more complicated than an edge.  We still require this ``generalized bridge" to disconnect the graph and to meet each component $G_i$ of the disconnected graph in a single vertex $h_i$, as in Figure \ref{F:generalbridge}.
\begin{figure}
	\definecolor{cqcqcq}{rgb}{0.7529411764705882,0.7529411764705882,0.7529411764705882}
	\definecolor{xfqqff}{rgb}{0.4980392156862745,0.,1.}
	\begin{tikzpicture}[scale=1.25]
	\clip(-0.74,0.22) rectangle (4.3,4.26);
	\draw [rotate around={5.454388281460479:(0.457830866504267,2.7651028948327334)},line width=1.2pt] (0.457830866504267,2.7651028948327334) ellipse (0.9653699809713796cm and 0.7215153091054788cm);
	\draw [rotate around={-37.10152679123632:(3.038405425016262,1.5869133474406685)},line width=1.2pt] (3.038405425016262,1.5869133474406685) ellipse (0.8914762430709756cm and 0.5107956850618345cm);
	\draw [rotate around={0.8658916839396917:(1.0230471280239055,1.1146398891866771)},line width=1.2pt] (1.0230471280239055,1.1146398891866771) ellipse (0.9532578054674283cm and 0.5420282616525114cm);
	\draw [rotate around={-33.86090315007394:(2.846455190913795,3.1858829712756807)},line width=1.2pt] (2.846455190913795,3.1858829712756807) ellipse (0.840960086417234cm and 0.5795935280113562cm);
	\draw [line width=1.2pt,dotted,color=cqcqcq] (1.02,2.48)-- (2.66,2.86);
	\draw [line width=1.2pt,dotted,color=cqcqcq] (2.66,2.86)-- (2.66,1.94);
	\draw [line width=1.2pt,dotted,color=cqcqcq] (2.66,1.94)-- (1.5,1.24);
	\draw [line width=1.2pt,dotted,color=cqcqcq] (1.5,1.24)-- (1.02,2.48);
	\draw [line width=1.2pt,dotted,color=cqcqcq] (1.02,2.48)-- (2.66,1.94);
	\draw [line width=1.2pt,dotted,color=cqcqcq] (1.5,1.24)-- (2.66,2.86);
	\begin{scriptsize}
	\draw[color=black] (-0.23,3.06) node {$G_1$};
	\draw[color=black] (3.37,1.46) node {$G_3$};
	\draw[color=black] (0.31,1.22) node {$G_4$};
	\draw[color=black] (2.47,3.76) node {$G_2$};
	\draw [fill=xfqqff] (1.02,2.48) circle (1.5pt);
	\draw[color=xfqqff] (1.05,2.98) node {$h_1$};
	\draw [fill=xfqqff] (2.66,2.86) circle (1.5pt);
	\draw[color=xfqqff] (2.91,3.16) node {$h_2$};
	\draw [fill=xfqqff] (2.66,1.94) circle (1.5pt);
	\draw[color=xfqqff] (2.93,2.1) node {$h_3$};
	\draw [fill=xfqqff] (1.5,1.24) circle (1.5pt);
	\draw[color=xfqqff] (1.35,1.16) node {$h_4$};
	\end{scriptsize}
	\end{tikzpicture}
	\caption{A generalized bridge.}
	\label{F:generalbridge}
\end{figure}
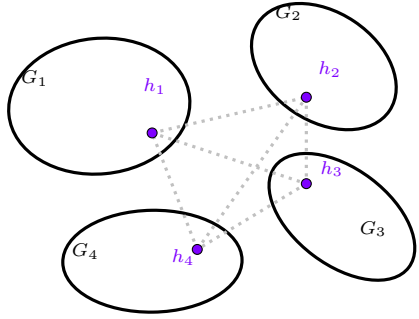
Intuitively, our proof decomposes $S(G)$ into a direct sum of splines on $H$ and on each $G_i$.  More formally, we have the following.  

\begin{theorem} \label{T:generalbridge}
	Suppose $(G,\alpha)$ is an edge labeled graph with a subgraph $H$ such that each connected component of the graph $G-E(H)$ contains exactly one vertex of $H$. Suppose $H$ has vertices $h_1, \dots, h_n$ and $G_i$ is the component of $G-E(H)$ containing $h_i$.  Then $S(G)$ is isomorphic to the direct sum of $R$-modules:
$$S(G) \cong \langle \mathbbm{1}_G \rangle \oplus {S}(H; h_1) \oplus {S}(G_1; h_1) \oplus \cdots \oplus {S}(G_n; h_n).$$
Moreover, for each $i$, let $\widetilde{S}(G_i; h_i)$ denote the subset of splines in $S(G)$ that are zero when restricted to the vertices in $H \cup \bigcup_{j \neq i} G_j$.  Let $\widetilde{S}(H)$ denote the splines in $S(G)$ such that for each $i$ the restriction to $G_i$ is constant. (If $i \neq j$ then the restrictions to $G_i$ and $G_j$ need not agree.) Then $S(G)$ is the internal direct sum of submodules 
$$S(G) = \langle \mathbbm{1}_G \rangle \oplus \widetilde{S}(H; h_1) \oplus \widetilde{S}(G_1; h_1) \oplus \cdots \oplus \widetilde{S}(G_n; h_n).$$
\end{theorem}

\begin{proof} 
Suppose $p$ is a spline in $S(G)$.  The spline defined by $p' = p - p(h_1)\mathbbm{1}_G$ is in $S(G; h_1)$.  Let $p'_H$ be the restriction of $p'$ to $H$ and let $\widetilde{p'_H}$ be the extension of $p'_H$ to $G$ defined by setting $\widetilde{p'_H}(u)=p'(h_i)$ for all $i$ and vertices $u$ in $G_i$.  Note that $\widetilde{p'_H} \in \widetilde{S}(H; h_1) \subseteq S(G; h_1)$.  Now for each $i$, let $q_i$ be the restriction of $p' - \widetilde{p'_H}$ to $G_i$, so $q_i \in S(G_i; h_i)$.  Extend $q_i$ to the spline $\widetilde{q_i} \in S(G)$ by setting $\widetilde{q_i}(u) = 0$ for all vertices $u$ off of $G_i$.  Note that $\widetilde{q_i} \in \widetilde{S}(G_i; h_i)$.  Thus by construction
$$p = p(h_1)\mathbbm{1}_G + \widetilde{p'_H} + \sum_{i=1}^n{\widetilde{q_i}} \in \langle \mathbbm{1}_G \rangle \oplus \widetilde{S}(H; h_1) \oplus \widetilde{S}(G_1; h_1) \oplus \cdots \oplus \widetilde{S}(G_n; h_n).$$
	
Now we show that this decomposition of $p$ is unique.  Suppose 
$$p = r\mathbbm{1}_G + q_0 + q_1 + q_2 + \cdots + q_n  = 0,$$
where $q_0 \in \widetilde{S}(H; h_1)$ and $q_i \in \widetilde{S}(G_i,h_i)$ for $1 \leq i \leq n$.  Note that $p(h_i) = q_0(h_i) + r$ for each $i$ and, in particular, $p(h_1) = r$.  Since $p = 0$ this means $r = 0$.  Then 
$$\widetilde{p'_H} = q_0 + r\mathbbm{1}_G = q_0$$
since each $q_i$ is zero on $H$ and $p$ is constant on each $G_i$.  But since $p = 0$, we can explicitly compute $p'_H = 0$ and so $q_0 = 0$.  An analogous argument shows that for each $G_i$, the spline $$\widetilde{p}_{G_i} = q_i + (q_0(h_i) + r)\mathbbm{1}_G = q_i = 0,$$
and so the decomposition is unique.  

Finally, note that the restriction map is a natural module isomorphism between the submodule $\widetilde{S}(H; h_1)$ and $S(H; h_1)$, respectively $\widetilde{S}(G_i; h_i)$ and $S(G_i; h_i)$.  This proves the claim.
\end{proof}

We apply this decomposition to the special case of a graph with a bridge that is labeled by a principal ideal; it generalizes to any graph with a bridge if the single generator $\beta \chi_B$ is replaced by the set of generators of the ideal labeling the bridge.

\begin{corollary}\label{bridge}
Let $G$ be a connected graph with bridge $ab$ and edge labeling $\alpha$ such that the bridge is labeled by the principal ideal $\alpha(ab) = (\beta)$.  The graph $G-\{ab\}$ has two components; denote the component that contains $a$ by $A$ and the component that contains $b$ by $B$. Let $\chi_B$ represent the function ({\em not} necessarily a spline) that is identically 1 on the vertices of $B$ and 0 on the vertices of $A$.  Then we have an $R$-module decomposition
$$S(G) \cong S(A; a) \oplus S(B; b) \oplus \langle \mathbbm{1}_G \rangle \oplus \langle \beta \chi_B \rangle.$$
\end{corollary}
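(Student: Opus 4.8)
The plan is to derive this as a special case of Theorem \ref{T:generalbridge}, taking the subgraph $H$ to be the bridge $ab$ itself (with its two endpoints $a$ and $b$). First I would check that $H = ab$ satisfies the hypothesis of the theorem: $G - E(H) = G - \{ab\}$ has exactly two components, $A$ containing $a$ and $B$ containing $b$, and each contains exactly one vertex of $H$. So with $h_1 = a$, $h_2 = b$, $G_1 = A$, $G_2 = B$, the theorem gives
$$S(G) = \langle \mathbbm{1}_G \rangle \oplus \widetilde{S}(H; a) \oplus \widetilde{S}(A; a) \oplus \widetilde{S}(B; b).$$
Since extending a spline on $A$ that vanishes at $a$ by zero off $A$ is an isomorphism $S(A; a) \cong \widetilde{S}(A; a)$ (and likewise for $B$), it remains only to identify the summand $\widetilde{S}(H; a)$ with $\langle \beta \mathbbm{1}_B \rangle$.

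The key step is therefore to describe $S(H; a)$, the splines on the single edge $ab$ based at $a$. Such a spline $p$ has $p(a) = 0$ and must satisfy $p(a) - p(b) \in \alpha(ab) = (\beta)$, so $p(b) \in (\beta)$; conversely any such assignment is a spline. Thus $S(H; a) = \{\, p : p(a) = 0,\ p(b) = \beta r \text{ for some } r \in R \,\}$, which is the cyclic $R$-module generated by the spline sending $a \mapsto 0$, $b \mapsto \beta$. Extending this generator to $G$ by the constant value on each component (per the theorem's construction) gives exactly the function that is $0$ on all of $A$ and $\beta$ on all of $B$ — i.e. $\beta \mathbbm{1}_B$. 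Hence $\widetilde{S}(H; a) = \langle \beta \mathbbm{1}_B \rangle$, and substituting yields the claimed decomposition $S(G) \cong S(A; a) \oplus S(B; b) \oplus \langle \mathbbm{1}_G \rangle \oplus \langle \beta \mathbbm{1}_B \rangle$.

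I expect the only mild subtlety is bookkeeping around the two notions of "based at a vertex": in $\widetilde{S}(H; a)$ the spline is based at $a$, and one should note that $\beta \mathbbm{1}_B$ is genuinely a spline on $G$ (its only nonconstant edge is the bridge, across which the jump is $\beta \in (\beta)$), so the internal direct sum statement makes sense literally inside $S(G)$. No step here is a real obstacle — the content is entirely in Theorem \ref{T:generalbridge}, and this corollary is just the instantiation $H = \{ab\}$ together with the elementary computation of splines on a single labeled edge. For the final remark about general (non-principal) bridge labels, one would replace the single generator $\beta$ of $\alpha(ab)$ by a generating set $\beta_1, \dots, \beta_k$ and observe that $S(H; a)$ is then generated by the functions $\beta_j \mathbbm{1}_B$, giving $\widetilde{S}(H;a) = \langle \beta_1 \mathbbm{1}_B, \dots, \beta_k \mathbbm{1}_B \rangle$ in place of $\langle \beta \mathbbm{1}_B \rangle$.
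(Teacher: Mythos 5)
Your proposal is correct and follows essentially the same route as the paper: both apply Theorem \ref{T:generalbridge} with $H$ taken to be the single edge $ab$ and then observe that the splines on that edge based at $a$ are generated by the spline sending $a\mapsto 0$, $b\mapsto\beta$, whose extension to $G$ is $\beta\mathbbm{1}_B$. Your version simply spells out the elementary computation of $S(H;a)$ that the paper states in one line.
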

\begin{proof}
This follows directly from the previous theorem once we note that the graph consisting simply of the edge $ab$ is generated as an $R$-module by the identity spline together with the spline that is $\beta$ on $b$ and $0$ on $a$ and that $\chi_B$ is the extension of this latter spline to all of $G$.
\end{proof}


\begin{remark} \label{remark: recurse for trees}
If $G$ is a tree, we can recursively apply Corollary ~\ref{bridge} to obtain a minimal generating set for $S(G)$ as an $R$-module. In this sense, our result generalizes the construction of bases of splines for trees in \cite{gpt}.
\end{remark}

The following example illustrates how Lemma~\ref{delete gcd}, Corollary~\ref{reducetotree}, and Corollary~\ref{bridge} can be used to find a minimal set of generators for the $R$-module of splines on a graph whose edge labels can be linearly ordered by inclusion. 

\begin{example}
Let $\beta, \gamma, \delta\in R$.  Suppose we have the graph $G$ with vertices $a,b,c$ and edge labeling $\alpha$ given below.  Let $G' = G - \{bc\}$ and $G'' = G' - \{ab\}$.

\begin{center}
\scalebox{.9}{\begin{tikzpicture}
\node at (-1, 1.5) {$G =$};
\draw (0,0) -- (0,3) node[midway, right] {$(\delta\beta\gamma)$};
\draw (0,0) -- (2,1.5); 
\node at (1.25, .4) {$(\delta)$};
\draw (0,3) -- (2,1.5); 
\node at (1.25, 2.6) {$(\delta\beta)$};
\draw [fill] (0,0) circle (0.15);
\draw [fill] (0,3) circle (0.15);
\draw [fill] (2,1.5) circle (0.15);
\node [right] at (2,1.5) {$\;b$};
\node [above left] at (0,3) {$a$};
\node [below left] at (0,0) {$c$};

\node at (4, 1.5) {$G' =$};
\draw (5,0) -- (5,3) node[midway, right] {$(\delta\beta\gamma)$};
\draw (5,3) -- (7,1.5); 
\node at (6.25, 2.6) {$(\delta\beta)$};
\draw [fill] (5,0) circle (0.15);
\draw [fill] (5,3) circle (0.15);
\draw [fill] (7,1.5) circle (0.15);
\node [right] at (7,1.5) {$\;b$};
\node [above left] at (5,3) {$a$};
\node [below left] at (5,0) {$c$};

\node at (9, 1.5) {$G'' =$};
\draw (10,0) -- (10,3) node[midway, right] {$(\delta\beta\gamma)$};
\draw [fill] (10,0) circle (0.15);
\draw [fill] (10,3) circle (0.15);
\draw [fill] (12,1.5) circle (0.15);
\node [right] at (12,1.5) {$\;b$};
\node [above left] at (10,3) {$a$};
\node [below left] at (10,0) {$c$};

\end{tikzpicture}}
\end{center}

Since $\alpha(bc)=(\delta)$ contains every other edge label of the cycle, we have $S(G)=S(G')$ by Lemma \ref{delete gcd}. Thus we have reduced to a tree as described by Corollary ~\ref{reducetotree}.  To apply Corollary ~\ref{bridge}, we consider the bridge $ab$ and let $G'' = G' -  \{ab\}$.  Let $A$ be the component of $G''$ containing $a$ and $B$ be the component of $G''$ containing $b$.  If the coordinates are ordered $(a,b,c)$, then the generators of $S(A; a)$ are $\mathcal{B}_a=\{(0,0,\delta\beta\gamma)\}$ and $S(B; b) = 0$.  By Corollary~\ref{bridge} we have 
$$S(G) = S(G') = S(A; a) \oplus S(B; b) \oplus \langle \mathbbm{1}_G \rangle \oplus \langle \delta\beta \chi_B \rangle$$ 
for which a minimal generating set is $U=\{(0,0,\delta\beta\gamma), (1,1,1),(0,\delta\beta,0)\}$.
\end{example}


\section{Graphs with rank one}\label{sec:constantsplines}

In this section, we consider labeled graphs that admit only constant splines, which we call graphs with rank one.

\begin{definition}\label{def:constantsplines}
Given a graph $G$ with edge labeling $\alpha$, a \textit{constant spline} on $(G,\alpha)$ takes the same value on every vertex of $G$.  The graph $(G,\alpha)$ has \textit{rank one} over the ring $R$ if it admits only constant splines.
\end{definition}

Graphs with rank one are particularly interesting from the point of view of subgraphs.  By Theorem \ref{T:generalbridge}, if the ``bridge" graph $H$ has rank one, then its contribution to the space of splines is just zero.  More generally, if a subgraph has rank one, it can be contracted to a vertex without changing the space of splines.  Our main result is a characterization of rank one graphs over rings which are direct sums of irreducible rings.

\begin{definition} \label{irreducible}
Recall that an ideal $I$ of a ring $R$ is \textit{irreducible} (sometimes called \textit{meet-irreducible}) if it is not the intersection of two strictly larger ideals.  A ring $R$ is \textit{irreducible} if $(0)$ is an irreducible ideal in $R$.  In other words, a ring $R$ is irreducible if the intersection of any two non-zero ideals is non-zero.
\end{definition}

\begin{remark}\label{R:irreducible}
Examples of irreducible rings include integral domains, uniserial rings such as $\Z_{p^k}$ for $p$ prime, and Artinian Gorenstein rings. An irreducible ring cannot be decomposed as the direct sum of two non-trivial rings, though the converse is not true. For example, the ring $\mathbb{R}[x,y]/(x^2,y^2,xy)$ cannot be decomposed as a direct sum of two of its ideals because one of those two ideals must contain polynomials with a non-zero constant term, and $ax+by+c$ generates the entire ring whenever $c \neq 0$.  However, this quotient is not irreducible since $(x) \cap (y) = (0)$.
\end{remark}

We now consider splines over a ring $R=\bigoplus_{i=1}^k R_i$, where each $R_i$ is an irreducible commutative ring with identity $1_{R_i}$. Let $\pi_i:R\rightarrow R_i$ be the canonical projection homomorphism and let $M_i$ denote the kernel of $\pi_i$. 

\begin{lemma}\label{closure}
	Suppose $R=\bigoplus_{i=1}^k R_i$ and that each
	$R_i$ is an irreducible commutative ring with identity $1_{R_i}$.
	If $I$ and $J$ are ideals of $R$ that are not contained in $M_i$, then $I\cap J$ is not contained in $M_i$.
\end{lemma}
\begin{proof}
	Suppose that $I$ and $J$ are ideals of $R$ that are not contained in  $M_i$ for some $i$.  It follows that $\pi_i(I)\neq0$ and $\pi_i(J)\neq0$. Since $R_i$ is irreducible,
	the ideal $\pi_i(I)\cap\pi_i(J)$ contains a non-zero element, say $x$. There must exist elements $r\in I$ and $s\in J$ such that $\pi_i(r)=\pi_i(s)=x$. Multiplying $r$ and $s$ each by $(0,\ldots,0,1_{R_i},0,\ldots,0)$ produces an element
	\((0,\ldots,0,x,0,\ldots,0)\)
	which belongs to both $I$ and $J$ but has non-zero $i$-th coordinate. Hence $I\cap J$ is not contained in $M_i$.  This proves the claim.
\end{proof}

We use this lemma to prove our main theorem, which is a complete graph-theoretic characterization of the graphs of rank one over rings $R$ of this form.

\begin{theorem} \label{rank1}
Suppose $R=\bigoplus_{i=1}^k R_i$ and that each
	$R_i$ is an irreducible commutative ring with identity $1_{R_i}$.  Let $G$ be a connected graph with edge set $E(G)$ and edge labeling $\alpha$.  Then the following are equivalent.
\begin{enumerate}
\item The edge labeled graph $(G,\alpha)$ has rank one over $R$.
\item For any cutset $C\subset E(G)$, the intersection $\displaystyle\bigcap_{e\in C}\alpha(e)=0$.
\item The graph $G$ has spanning trees $T_1, \ldots, T_k$ such that all edge labels of $T_i$ are contained in $M_i$ for all  $1\leq i\leq k$.
\end{enumerate}
\end{theorem}

\begin{remark} \label{remark:irreducible} When $R$ itself is irreducible, equivalently $k=1$, the third condition reduces to $G$ having a spanning tree with edges labeled by the zero ideal.  In general, the spanning trees $T_i$ need not be disjoint (or even distinct).	\end{remark}

\begin{proof}

First we prove $(1)\Rightarrow(2)$ by contrapositive. Let $C\subset E(G)$ be a cutset of $G$.   Without loss of generality, we assume that $C$ is minimal in the sense that no proper subset of $C$ is a cutset. Then there exists  a partition of the vertex set $V$ into nonempty subsets $V_1$ and $V_2$ such that $C=\{uv\in E(G)\mid u\in V_1, v\in V_2\}$.  Suppose there exists a non-zero element $x$ of $\bigcap_{e\in C}\alpha(e)$. Define a vertex labeling $p:V\rightarrow R$ by $p(u)=x$ for all $u\in V_1$ and $p(v)=0$ for all $v\in V_2$.  The restriction of $p$ to $V_1$ is a spline, as is the restriction of $p$ to $V_2$.  For any $u\in V_1$ and any $v\in V_2$, we have  $p(u)-p(v)=x\in \bigcap_{e\in C}\alpha(e)$.  The edge $uv$ is in $C$, so we conclude $x\in\alpha(uv)$.  Thus $p$ is a nonconstant spline on $G$, so $(G,\alpha)$ does not have rank one.

Next we prove $(2) \Rightarrow (3)$ by contrapositive.
For $1\leq i\leq k$, consider the equivalence relation $\equiv_i$ on $V(G)$ defined by $u\equiv_i v$ if and only if there exists a path from $u$ to $v$ having all edge labels contained in $M_i$.  Now suppose $i$ is an index such that there is no spanning tree $T_i$ such that all of its edge labels are contained in $M_i$. Then the relation $\equiv_i$ determines more than one equivalence class. Let $V_1$ be one of the equivalence classes and let $V_2$ be the union of the rest.
Note that if $v_1v_2$ is an edge with $v_1\in V_1$ and $v_2\in V_2$, then $\alpha(v_1v_2)$ cannot be contained in $M_i$ since $v_2\not\equiv_i v_1$. Now $C:=\{v_1v_2\in E(G)\mid v_1\in V_1, v_2\in V_2\}$ is a cutset (which is nonempty since $G$ is connected).  Lemma~\ref{closure} implies
that $\bigcap_{e\in C}\alpha(e)\not\subseteq M_i$. Hence $C$ is a cutset for which the intersection $\bigcap_{e\in C}\alpha(e)$ is non-zero.

Finally, we prove $(3)\Rightarrow (1)$. Suppose that for all $1\leq i\leq k$ the graph $(G,\alpha)$ has a spanning tree $T_i$ whose edge labels are all in $M_i$. Suppose $p$ is a spline on $G$. Since every pair of vertices $u$ and $v$ is connected by a path in $T_i$, we know $p(u)-p(v)\in M_i$. But then  $p(u)-p(v)\in\bigcap_{i=1}^k M_i=0$. Hence $p$ is a constant spline.
\end{proof}

The implications $(3) \Rightarrow (1) \Rightarrow (2)$ hold for any direct product $R=\displaystyle\bigoplus_{i=1}^k R_i$, but Examples \ref{2not1} and \ref{1not3} illustrate how the converse implications can fail if the factors are not irreducible.

\begin{example} \label{2not1}
Let $R=\mathbb{R}[x,y]/(x^2, xy, y^2)$ and consider the graph shown below:
\begin{center}
\scalebox{.9}{\begin{tikzpicture}
\draw (0,0) -- (0,3) node[midway, left] {$(x-y)$};
\draw (0,0) -- (2,1.5); 
\node at (1.25, .4) {$(y)$};
\draw (0,3) -- (2,1.5); 
\node at (1.25, 2.6) {$(x)$};
\draw [fill] (0,0) circle (0.15);
\draw [fill] (0,3) circle (0.15);
\draw [fill] (2,1.5) circle (0.15);
\node [right] at (2,1.5) {$\;0$};
\node [above left] at (0,3) {$x$};
\node [below left] at (0,0) {$y$};
\end{tikzpicture}}
\end{center}
Each cutset of a triangle contains at least two edges, so since the pairwise intersections of the edge labels are trivial, Condition $(2)$ of Theorem \ref{rank1} holds.  But the vertex labeling above is a nonconstant spline, so Condition $(1)$ is false.
\end{example}

\begin{example} \label{1not3}
Consider the labeled graph $(K_4, \alpha)$ over $R = \mathbb{R}[x,y]/(x^2,y^2,xy)$ shown below:  

\begin{center}
\begin{tikzpicture}
\draw (0,0) -- (3,0) node[midway,below] {$(x)$};
\draw  (3,0) -- (3,3) node[midway,right] {$(x+y)$};
\draw (3,3) -- (0,3) node[midway,above] {$(x+y)$};
\draw (0,3) -- (0,0) node[midway,left] {$(y)$};
\draw (0,0) -- (3,3);
\node at (2.5,2) {$(x)$};
\draw (0, 3) -- (3,0);
\node at (.5, 2) {$(y)$};
\draw [fill] (0,0) circle (0.15);
\node at (-.25,-.25) {1};
\draw [fill] (0,3) circle (0.15);
\node at (-.25,3.25) {2};
\draw [fill] (3,0) circle (0.15);
\node at (3.25,-.25) {4};
\draw [fill] (3,3) circle (0.15);
\node at (3.25,3.25) {3};
\end{tikzpicture}
\end{center}

Suppose $p$ is a spline on this graph. Inspect the following triangles:
\begin{itemize}
\item 	triangle $124$ shows $p(4)-p(1)\in(x)\cap(y)=0$
\item triangle $234$ shows $p(4)-p(2)\in(y)\cap(x+y)=0$
\item triangle $314$ shows $p(4)-p(3)\in (x+y)\cap(x)=0$
\end{itemize}
Thus $p(4)=p(i)$ for $i=1,2,3$, so $p$ is a constant spline. 

Hence this graph satisfies Condition $(1)$ in Theorem \ref{rank1}.  However, since $R$ cannot be decomposed as a direct sum (see Remark \ref{remark:irreducible}), to satisfy Condition $(3)$ the graph must have a spanning tree with edges labeled $(0)$, which it does not.
\end{example}

As a corollary to Theorem \ref{rank1}, we characterize when a labeled tree has rank one over {\em any} ring.

\begin{corollary} \label{rank 1 trees}
If $(G,\alpha)$ is a tree over a ring $R$, it is rank one if and only if every edge is labeled (0).
\end{corollary}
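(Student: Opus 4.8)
The plan is to derive this immediately from Lemma~\ref{rank 1 bridge} together with the elementary structure of trees. First observe one direction is trivial: if every edge of the tree $(G,\alpha)$ is labeled $(0)$, then the spline condition forces $p(u)=p(v)$ across every edge, and since a tree is connected this propagates to give $p$ constant on all of $V$; hence $(G,\alpha)$ is rank one. (Alternatively, this is a special case of Lemma~\ref{contract zero}: contracting all the edges labeled $(0)$ collapses the tree to a single vertex, whose spline module is $R$, i.e.\ only constants.)

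For the converse, I would argue by contraposition. Suppose $(G,\alpha)$ is a tree in which some edge $uv$ has $\alpha(uv)\neq(0)$. The key point is that in a tree \emph{every} edge is a bridge: removing any edge from a tree disconnects it (a tree on $n$ vertices has exactly $n-1$ edges and is connected, so it has no cycles, and an edge lies on no cycle precisely when it is a bridge). Therefore $uv$ is a bridge with nonzero label, and Lemma~\ref{rank 1 bridge} produces a non-constant spline on $(G,\alpha)$, so $(G,\alpha)$ does not have rank one. Combining the two directions gives the stated equivalence.

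There is essentially no obstacle here; the only thing to be careful about is invoking the fact that every edge of a tree is a bridge, which one should state explicitly since it is what lets Lemma~\ref{rank 1 bridge} apply to an arbitrary edge of the tree. If one wanted to avoid citing Lemma~\ref{rank 1 bridge} and give a self-contained argument, one could instead pick the nonzero element $x\in\alpha(uv)$, delete $uv$ to get components containing $u$ and $v$ respectively, and define $p$ to be $x$ on the $u$-component and $0$ on the $v$-component; this is a spline because $uv$ is the only edge between the two components and $p(u)-p(v)=x\in\alpha(uv)$, while all other edges lie within a single component where $p$ is constant. Either way the proof is short.
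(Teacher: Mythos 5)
Your proof is correct and follows essentially the same route as the paper: the forward direction via Lemma~\ref{contract zero} (or the equivalent direct propagation argument) and the converse by noting every edge of a tree is a bridge and invoking Lemma~\ref{rank 1 bridge}. No issues.
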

\begin{proof}
If every edge is labeled (0), then the graph can be contracted to a single vertex without changing the space of splines, by Lemma \ref{contract zero}.  Hence the space of splines will be generated by the constant splines, and the graph has rank one.

Conversely, since every edge of a tree is a bridge, if there is an edge whose label is {\em not} (0), then by the first part of Theorem \ref{rank1} there is a non-constant spline, so the graph does not have rank one.  (Note that the proof that $(1) \Rightarrow (2)$ in Theorem \ref{rank1} does not impose any restriction on the ring.)
\end{proof}

\subsection{Integers mod $m$.}
As a particular case of Theorem \ref{rank1}, we consider the ring $\Z_m$ of integers modulo $m$.  Splines over $\Z_m$ were previously studied by Bowden and the last author \cite{bt}, who showed that the space of splines on a graph of $n$ vertices could have any rank between 1 and $n$.  We characterize when the space of splines has rank one. 


\begin{corollary} \label{Zm rank 1}
Fix an integer $m$ with prime factorization $m = p_1^{e_1}\cdots p_k^{e_k}$ and let $R = \Z_m$.  The graph $(G, \alpha)$ has rank one if and only if $G$ contains spanning trees $T_1, \dots, T_k$ such that all edge labels of $T_i$ are contained in the ideal $(p_i^{e_i})$.  In particular, if $m = p^e$ then $(G, \alpha)$ has rank one if and only if $G$ has a spanning tree with all edges labeled $(0)$.
\end{corollary}

\begin{proof}
	If the prime factorization of $m$ is $m = p_1^{e_1}\cdots p_k^{e_k}$, then the Chinese Remainder Theorem implies
$\Z_m \cong \Z_{p_1^{e_1}} \oplus \cdots \oplus\Z_{p_k^{e_k}}$.  Each ring $\Z_{p_i^{e_i}}$ is uniserial and hence irreducible, and $M_i = (p_i^{e_i})$ for each $i$.  The claim thus follows from Theorem~\ref{rank1} and Remark~\ref{remark:irreducible}.
\end{proof}

\begin{example}
Let $R = \Z_{pq}$, where $p$ and $q$ are distinct primes.  The only ideals in $R$ are $(0), (p), (q), (1)$.  Consider the graph $(K_4, \alpha)$ shown below:

\begin{center}
\begin{tikzpicture}
\draw (0,0) -- (3,0) node[midway,below] {$(p)$};
\draw [ultra thick] (3,0) -- (3,3) node[midway,right] {$(q)$};
\draw (3,3) -- (0,3) node[midway,above] {$(p)$};
\draw [ultra thick] (0,3) -- (0,0) node[midway,left] {$(q)$};
\draw (0,0) -- (3,3);
\node at (2.5,2) {$(p)$};
\draw [ultra thick] (0, 3) -- (3,0);
\node at (.5, 2) {$(q)$};
\draw [fill] (0,0) circle (0.15);
\draw [fill] (0,3) circle (0.15);
\draw [fill] (3,0) circle (0.15);
\draw [fill] (3,3) circle (0.15);
\end{tikzpicture}
\end{center}

The thick and thin edges indicate two spanning trees, one with all edges labeled $(p)$ and the other with all edges labeled $(q)$.  By Corollary \ref{Zm rank 1} this graph has rank one.  Note this graph has rank one even though {\em none} of the edges are labeled $(0)$.
\end{example}

\begin{example}
Instead of simply asking whether a particular labeled graph has rank one, as in the previous example, we could instead ask more generally {\em which} labelings of a given (unlabeled) graph will yield a labeled graph with rank one.  

Motivated by classical splines, we examine the dual graph of a particular triangulation of the plane. We begin with a rectangular grid each of whose squares is divided by a diagonal into two triangles, with the triangles then subdivided by a Clough-Tocher refinement as shown on the left in Figure \ref{F:triangulation}. Starting with an $m \times n$ grid, we denote this graph $G_{m,n}$.  The graph dual to the triangulated grid (ignoring the exterior region) is shown on the right in Figure \ref{F:triangulation}; we denote this graph $G^*_{m,n}$.  

Splines on these graphs were studied by Zhou and Lai \cite{zl}, though we study them over $\Z_r$ rather than the base ring Zhou and Lai used (namely polynomials with two variables).  The module of splines in this example differs very dramatically from that of Zhou and Lai, demonstrating the impact of changing the base ring.

Note that the graph $G^*_{m,n}$ always contains at least two bridges.  If the graph has rank one, then all of the bridges must be labeled $(0)$.   In the following proposition we use Corollary \ref{Zm rank 1} to give a better lower bound on the number of edges labeled $(0)$ than simply by counting bridges. 

\begin{figure}[htbp]
\begin{center}
$$\scalebox{.35}{\includegraphics{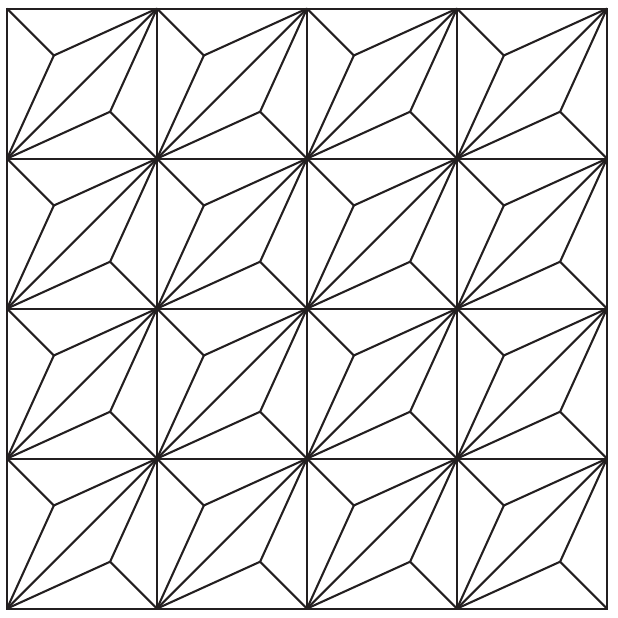}} \qquad \scalebox{.35}{\includegraphics{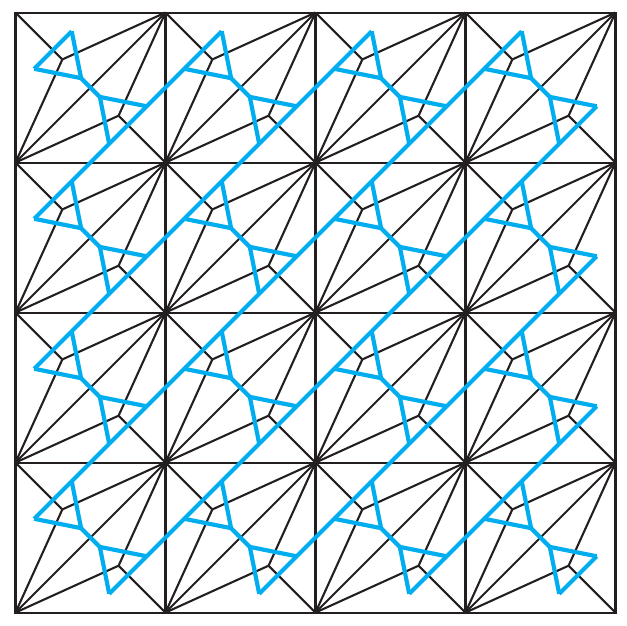}} \qquad \scalebox{.35}{\includegraphics{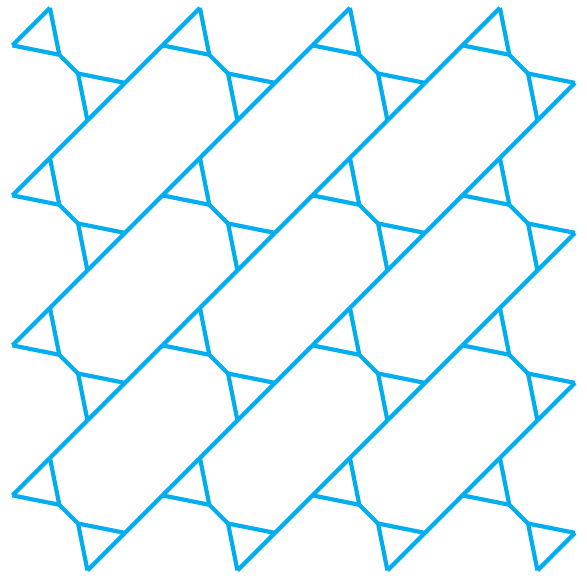}}$$
\end{center}
\caption{Dual graph for a triangulated rectangular grid.}
\label{F:triangulation}
\end{figure}


\begin{proposition} \label{triangulated grid}
If $r$ has at most 3 distinct prime factors and $\a$ is a labeling of the edges of $G^*_{m,n}$ over $\Z_r$ such that $(G^*_{m,n}, \a)$ has rank one, then a lower bound on the number of edges labeled $(0)$ is
\begin{itemize}
\item $6mn - 1$ if $r = p^a$,
\item $3mn + m + n - 2$ if $r = p_1^{a_1} p_2^{a_2}$, and
\item $2m + 2n -3$ if $r = p_1^{a_1}p_2^{a_2}p_3^{a_3}$.
\end{itemize}

\end{proposition}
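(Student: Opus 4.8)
The plan is to combine the structural characterization of rank-one graphs over $\Z_r$ (Theorem~\ref{Zm rank 1}) with a vertex/edge count of the dual graph $G^*_{m,n}$ and one elementary inequality about intersections of spanning trees. The point is that rank one forces the existence of spanning trees $T_1,\dots,T_k$ whose labels lie in $(p_1^{e_1}),\dots,(p_k^{e_k})$, and an edge lying in \emph{all} of them must be labeled $(0)$; so it suffices to show that these spanning trees are forced to overlap in many edges, which in turn comes from the fact that each has $6mn-1$ edges in a graph with only $9mn-m-n$ edges.

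First I would record the combinatorial data of $G^*_{m,n}$. Beginning with the $m\times n$ grid, inserting one diagonal in each of the $mn$ squares yields $2mn$ triangles, and each Clough--Tocher refinement splits a triangle into three, so the triangulation $G_{m,n}$ has $6mn$ triangular faces; hence $G^*_{m,n}$ has $6mn$ vertices and every spanning tree of $G^*_{m,n}$ has $6mn-1$ edges. For edges: $G_{m,n}$ has $2mn+m+n$ grid edges, $mn$ diagonals, and $6mn$ edges joining the Clough--Tocher centroids to triangle vertices, a total of $9mn+m+n$ (the Euler count $V-E+F = (3mn+m+n+1)-(9mn+m+n)+(6mn+1)=2$ confirms this). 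Of these, exactly the $2m+2n$ boundary edges of the rectangle fail to be shared by two triangles (the Clough--Tocher split does not subdivide existing edges, so the boundary is untouched), and since $G^*_{m,n}$ ignores the exterior region, $G^*_{m,n}$ has $9mn+m+n-(2m+2n)=9mn-m-n$ edges. Write $N:=9mn-m-n$.

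Now suppose $(G^*_{m,n},\a)$ has rank one over $\Z_r$ with $r=p_1^{a_1}\cdots p_k^{a_k}$ and $k\le 3$. By Theorem~\ref{Zm rank 1} there are spanning trees $T_1,\dots,T_k$ of $G^*_{m,n}$ with all labels of $T_i$ contained in $(p_i^{a_i})$. Any edge lying in $E(T_1)\cap\cdots\cap E(T_k)$ has label contained in $\bigcap_i (p_i^{a_i})=(p_1^{a_1}\cdots p_k^{a_k})=(r)=(0)$ in $\Z_r$ (the same coprimality argument used in the proof of Theorem~\ref{Zm rank 1}), so it is labeled $(0)$. Thus the number of $(0)$-edges is at least $|E(T_1)\cap\cdots\cap E(T_k)|$. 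Iterating the bound $|A\cap B|\ge |A|+|B|-N$ gives $|A_1\cap\cdots\cap A_k|\ge \sum_i|A_i|-(k-1)N$ for subsets of an $N$-element set, so with $|E(T_i)|=6mn-1$ the number of $(0)$-edges is at least $k(6mn-1)-(k-1)(9mn-m-n)$. Evaluating: for $k=1$ this is $6mn-1$; for $k=2$ it is $2(6mn-1)-(9mn-m-n)=3mn+m+n-2$; for $k=3$ it is $3(6mn-1)-2(9mn-m-n)=2m+2n-3$. These are exactly the three claimed bounds.

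\textbf{Main obstacle.} There is no deep difficulty once Theorem~\ref{Zm rank 1} is in hand; the one place to be careful is the edge/vertex bookkeeping for $G^*_{m,n}$, in particular verifying that the rectangle's $2m+2n$ boundary edges, and \emph{only} those, are the edges of $G_{m,n}$ that do not dualize to edges of $G^*_{m,n}$ (so that $|E(G^*_{m,n})|=9mn-m-n$). After that the result is immediate from the iterated inclusion–exclusion inequality.
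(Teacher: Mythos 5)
Your proposal is correct and follows essentially the same route as the paper: invoke Theorem~\ref{Zm rank 1} to produce the spanning trees, count $6mn$ vertices and $9mn-m-n$ edges in $G^*_{m,n}$, and apply the inequality $|A\cap B|\ge |A|+|B|-N$ iteratively to force the overlap of the trees, whose common edges must be labeled $(0)$. Your verification of the edge/vertex counts via Euler's formula is a welcome extra check, but the argument is the same.
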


\begin{proof}
Our argument depends on counting the vertices and edges in $G^*_{m,n}$.  Observe that $G_{m,n}$ has $6mn$ regions and $9mn+m+n$ edges.  The exterior edges do not contribute to the dual graph and there are exactly $9mn-m-n$ interior edges.  So $G^*_{m,n}$ has $6mn$ vertices and $9mn-m-n$ edges.

If $r$ has only one prime factor, then by Corollary \ref{Zm rank 1} the graph $G^*_{m,n}$ has a spanning tree with all edges labeled $(0)$.  Since $G^*_{m,n}$ has $6mn$ vertices, its spanning tree has $6mn-1$ edges.    

Next suppose $\Z_r = \Z_{p^aq^b}$.  Also by Corollary \ref{Zm rank 1}, for every $\a$ the labeled graph $(G^*_{m,n},\a)$ has rank one over $\Z_{p^aq^b}$ if and only if $G^*_{m,n}$ contains spanning trees $T_1$ and $T_2$ with all edge labels of $T_1$ in $(p^a)$ and all edge labels of $T_2$ in $(q^b)$.  These ideals only intersect in $(0)$ so $T_1$ and $T_2$ only overlap on edges labeled $(0)$.

Since $G^*_{m,n}$ has $6mn$ vertices, each spanning tree has $6mn-1$ edges.  Since there are only $9mn-m-n$ edges total, the two spanning trees must overlap in at least $2(6mn-1) - (9mn-m-n) = 3mn +m+n-2$ edges.  Thus for $(G^*_{m,n}, \a)$ to have rank one, there must be at least $3mn+m+n-2$ edges labeled $(0)$.

Finally, suppose $\Z_r = \Z_{p_1^{a_1}p_2^{a_2}p_3^{a_3}}$ where $p_1, p_2, p_3$ are distinct primes.  Then $(G^*_{m,n}, \a)$ has rank one if and only if there are three spanning trees $T_i$ such that all labels of $T_i$ are in $(p_i^{a_i})$.  The intersection $T_1 \cap T_2$ contains at least $3mn+m+n-2$ edges, all with labels contained in $(p_1^{a_1}p_2^{a_2})$.  Hence $(T_1 \cap T_2) \cap T_3$ contains at least 
$$(6mn-1) + (3mn+m+n-2) - (9mn-m-n) = 2m+2n-3 \geq 1$$ 
edges, all of which must be labeled $(0)$. 
\end{proof}

\end{example}

\section{Acknowledgements}
We thank the Institute for Computational and Experimental Research in Mathematics and the American Institute of Mathematics for their generous support of the Research Experience for Undergraduate Faculty (REUF) program, which provided the authors the opportunity to meet for a week during the summers of 2017 and 2018. We also thank the Simons Foundation for its support (\#360097, Alissa Crans) and the National Science Foundation for its support (NSF-DMS 1362855 and 1800773, Julianna Tymoczko). Finally, we are deeply grateful to the referee for their careful reading and helpful suggestions, which greatly improved our work.


\bibliographystyle{alpha}
\bibliography{splinesref-1}

\end{document}